\begin{document}
\title[Self-similar blow-up solutions]
{Self-similar blow-up solutions of the KPZ equation}

\author[A. Gladkov ]
{Alexander Gladkov}  

\address{Alexander Gladkov \newline
Department of Mathematics and Mechanics \\
Belarusian State University \\
Nezavisimosti avenue,4,  220030, Minsk, Belarus}
\email{gladkoval@mail.ru}

\subjclass[2000]{34E05, 35K55} \keywords{KPZ equation,
self-similar solutions}

\begin{abstract}
In this paper we consider self-similar blow-up solutions for the
generalized deterministic KPZ~equation $u_t = u_{xx} + \lambda
\vert u_x \vert ^q, \lambda > 0, q > 2.$ The asymptotic behavior
of self-similar solutions are studied.
\end{abstract}

\maketitle \numberwithin{equation}{section}
\newtheorem{theorem}{Theorem}[section]
\newtheorem{lemma}[theorem]{Lemma}
\newtheorem{remark}[theorem]{Remark}
\allowdisplaybreaks

\section{Introduction}\label{Sect.1}

We consider the generalized deterministic KPZ~equation
\begin{equation}\label{Eq.1.1}
\frac{\partial{u}}{\partial{t}} =
\frac{\partial^2{u}}{\partial{x}^2} + \lambda
\left|\frac{\partial{u}}{\partial{x}}\right|^q, \quad (x,t) \in
S_T := {\mathbb{R}} \times(0,T),
\end{equation}
where $\lambda,$ $q$ and $T $ are real constants, $\lambda > 0,$
$q>2,$ $T > 0.$ Equation (\ref{Eq.1.1}) was first considered in
the case $q=2$ by Kardar, Parisi and Zhang \cite{KPZ} in
connection with the study of the growth of surfaces. When $q=2$
(\ref{Eq.1.1}) has since been referred to as the deterministic
KPZ~equation. For $q \neq 2$ it also called the generalized
deterministic KPZ~equation or Krug-Spohn equation because it was
introduced in \cite{KS}. We refer to review article \cite{C} for
references and a detailed historical account of the KPZ equation.

The existence and uniqueness of a classical solution of the Cauchy
problem for (\ref{Eq.1.1}) with $q=1$ and initial function $u_0
\in C_0^3({\mathbb{R}}^n)$ was proven in \cite{Benar}. This result
was extended to $u_0 \in C^2({\mathbb{R}}^n) \cap
W^{2,\infty}({\mathbb{R}}^n)$ and $q \geq 1$ in \cite{AmoB} and to
$u_0 \in C({\mathbb{R}}^n) \cap L^{\infty}({\mathbb{R}}^n)$ and $q
\geq 0$ in \cite{GGK}. Several papers \cite{BL},
 \cite{Benac1}, \cite{BGL}, \cite{BSW2}, \cite{BSW} were devoted to the investigation of the Cauchy
problem for irregular initial data, namely for $u_0 \in
L^p({\mathbb{R}}^n),$ $1\leq p<\infty$ or for bounded measures.
The existence and uniqueness of a solution to the Cauchy problem
with unbounded initial datum are proved in \cite{GlGK}. To confirm
the optimality of obtained existence conditions the authors of
\cite{GlGK} analyze the asymptotic behavior of self-similar
blow-up solutions of (\ref{Eq.1.1}) for $q < 2.$

In this paper for $q > 2$ we investigate the asymptotic behavior
of self-similar blow-up solutions of (\ref{Eq.1.1}) of the
following form
\begin{equation}\label{Eq.4.1}
u(x,t)=(T_0-t)^\alpha f(\xi), \quad \mbox {where } \xi=\vert x \vert
(T_0-t)^\beta.
\end{equation}
Here $T_0$ is any positive constant.
After substitution (\ref{Eq.4.1}) into (\ref{Eq.1.1}) we find that
\begin{equation}\label{Eq.4.2}
\alpha = \frac{q-2}{2(q-1)}, \quad
\beta = -\frac{1}{2}
\end{equation}
and $f$ should satisfy the following equation
\begin{equation}\label{Eq.4.3}
{\mathcal L} f:= f^{\prime\prime} + \lambda \vert f^\prime \vert^q
-\frac{1}{2} \xi f^\prime + \frac{q-2}{2(q-1)} f = 0.
\end{equation}

We shall add to equation (\ref{Eq.4.3}) the following initial data
\begin{equation}\label{Eq.4.16}
f(0) = -1, \quad f^\prime (0)=0.
\end{equation}
A more general initial condition $f(0) = f_0 < 0$ may be
transformed by introducing a new function $f/\vert f_0 \vert$.  \\
Put
\begin{equation}\label{Eq.4.7}
C=\left[\frac{1}{\lambda(q-1)} \left( \frac{q-1}{q} \right)^q
\right]^ {1/(q-1)}.
\end{equation}

Our aim is to investigate the asymptotic behavior of solution of
(\ref{Eq.4.3}), (\ref{Eq.4.16}) for $q>2.$ The main result of the
paper is following.

\begin{theorem}\label{Thm.1}
Let $q>2$ and $f(\xi)$ be a solution of  problem (\ref{Eq.4.3}),
(\ref{Eq.4.16}). Then
\begin{equation}\label{Eq.4.40}
\lim_{\xi\to\infty} \frac{f(\xi)}{\xi^{q/(q-1)}} = C,
\end{equation}
where $C$ is defined in (\ref{Eq.4.7}).
\end{theorem}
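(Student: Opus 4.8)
Set $\gamma = q/(q-1)$, so that $\gamma-1 = 1/(q-1)$ and $\alpha := \frac{q-2}{2(q-1)}$ is the coefficient of $f$ in (\ref{Eq.4.3}). The plan is to pass from the second–order equation (\ref{Eq.4.3}) to an autonomous first–order (reduced) flow for suitably rescaled unknowns, and to show that the trajectory is attracted to the rest point corresponding to growth $C\xi^{\gamma}$.

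\emph{Step 1 (shape of the solution).} I would first prove that $f'(\xi)>0$ and $f''(\xi)>0$ for every $\xi>0$. From (\ref{Eq.4.3}), (\ref{Eq.4.16}) one has $f''(0)=\alpha>0$, so both hold for small $\xi$. Differentiating (\ref{Eq.4.3}) where $f'\neq0$ gives
\[
f''' + \lambda q (f')^{q-1} f'' - \tfrac12 f' - \tfrac12 \xi f'' + \alpha f' = 0,
\]
so at a point with $f''=0$, $f'>0$ one gets $f'''=(\tfrac12-\alpha)f'=\tfrac{1}{2(q-1)}f'>0$; hence $f''$ cannot decrease to $0$ while $f'>0$. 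A first zero $\xi_*$ of $f'$ is impossible as well: $f''(\xi_*)>0$ would force $f'<0$ just before $\xi_*$, while $f'(\xi_*)=f''(\xi_*)=0$ forces $f(\xi_*)=0$ by (\ref{Eq.4.3}), so $(f,f')$ would be the trivial solution of the first–order system equivalent to (\ref{Eq.4.3}) through its rest point $(0,0)$, contradicting $f(0)=-1$. Thus $f$ is increasing and strictly convex on $(0,\infty)$ and $f'(\xi)\to\ell\in(0,\infty]$; if $\ell<\infty$ then $f\sim\ell\xi$ and (\ref{Eq.4.3}) forces $f''\to\infty$, impossible. Hence $f'(\xi)\to\infty$, $f(\xi)\to\infty$, and there is a unique $\xi_0>0$ with $f>0$ on $(\xi_0,\infty)$.

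\emph{Step 2 (crude bounds and rescaling).} On $(\xi_0,\infty)$, from $f''>0$, $f>0$ and (\ref{Eq.4.3}) we get $\lambda(f')^q<\tfrac12\xi f'$, hence $f'(\xi)\le(\xi/2\lambda)^{1/(q-1)}$ and $f(\xi)=O(\xi^{\gamma})$: the solution grows polynomially (no super–exponential blow–up), and $V(\xi):=f'(\xi)\,\xi^{-(\gamma-1)}$, $F(\xi):=f(\xi)\,\xi^{-\gamma}$ are positive and bounded for large $\xi$. With $s=\ln\xi$, (\ref{Eq.4.3}) transforms into
\[
\frac{dF}{ds}=V-\gamma F,\qquad \frac{dV}{ds}=-\frac{V}{q-1}+\xi^{2}B,\qquad B:=\tfrac12 V-\lambda V^{q}-\alpha F,
\]
and, crucially, $B=\xi^{-\gamma}f''>0$ by Step 1. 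Since $V$, $F$ and $dF/ds$ stay bounded while $\xi^{2}\to\infty$, differentiating $B$ gives $\frac{dB}{ds}=(\tfrac12-\lambda q V^{q-1})\xi^{2}B+O(1)$, and a Gronwall–type argument then forces $B(\xi)\to0$; i.e. $(F,V)$ approaches the curve $\Gamma:\ \alpha F=\tfrac12 V-\lambda V^{q}$.

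\emph{Step 3 (reduced flow and the constant).} On $\Gamma$ the $F$–equation becomes the scalar equation
\[
\frac{dV}{ds}=\frac{2V(\lambda q V^{q-1}-1)}{(q-1)\,(1-2\lambda q V^{q-1})},
\]
with rest points $V=0$ (where $F=0$) and $V_{*}=(\lambda q)^{-1/(q-1)}$, $F_{*}=\tfrac{q-1}{q}(\lambda q)^{-1/(q-1)}$; a short computation identifies $F_{*}$ with $C$ of (\ref{Eq.4.7}). The value $V=0$ corresponds to the \emph{slow} self–similar profile $f(\xi)\sim\mathrm{const}\cdot\xi^{(q-2)/(q-1)}$, for which $f'(\xi)\to0$ — excluded by Step 1. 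On the branch of $\Gamma$ with $V>(2\lambda q)^{-1/(q-1)}$ (exactly where $\tfrac12 V-\lambda V^{q}$ is decreasing, i.e. where $\Gamma$ is normally attracting), $V_{*}$ is globally attracting and there is no other $\omega$–limit. Hence $V\to V_{*}$, $F\to F_{*}=C$, which is (\ref{Eq.4.40}).

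\emph{Main obstacle.} The real work lies in Steps 2–3: making rigorous that the full (singularly perturbed) trajectory reaches and remains near the normally attracting branch $V>(2\lambda q)^{-1/(q-1)}$ of $\Gamma$ — rather than drifting along the repelling branch, lingering at the fold $V=(2\lambda q)^{-1/(q-1)}$, or approaching $V=0$ — and excluding oscillation of $V$. This has to combine $B>0$, the a priori bounds, the monotonicity of $e^{s/(q-1)}V$ implied by $B>0$, and $f'(\xi)\to\infty$ (which kills the slow branch), through careful differential inequalities for $B$ along the trajectory.
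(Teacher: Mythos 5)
Your Step 1 and the first half of Step 2 are sound and in fact coincide with the paper's Lemmas \ref{Lem.7} and \ref{Lem.10} (your $V(\xi)=f'(\xi)\xi^{-1/(q-1)}$ in the variable $s=\ln\xi$ is exactly the paper's $g(t)$, and your identification of the rest point $V_*=(\lambda q)^{-1/(q-1)}$, $F_*=\tfrac{q-1}{q}V_*=C$ is correct). But the theorem is essentially equivalent to the statement $V\to V_*$, and that is precisely what you have not proved. The assertion that ``a Gronwall-type argument forces $B\to 0$'' does not stand as written: the coefficient $\tfrac12-\lambda q V^{q-1}$ in your equation for $B$ changes sign exactly at the fold $V=(2\lambda q)^{-1/(q-1)}$, so Gronwall gives decay of $B$ only if you already know $V$ stays uniformly above that value, while in the opposite regime it predicts growth of $B$, and turning that into a contradiction requires the a priori bounds in a quantitative way (boundedness of $V$ only yields $\int e^{2\tau}B\,d\tau=O(s)$, i.e.\ smallness of $B$ in an averaged sense, not pointwise). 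Likewise, even granting $B\to0$, the passage from ``the trajectory is near $\Gamma$'' to ``it converges to the rest point on the attracting branch'' is not automatic: you must exclude drifting toward $V=0$ along the repelling branch, lingering at the fold, and oscillation of $V$; normal hyperbolicity fails at the fold and the small parameter $\xi^{-2}$ is time-dependent, so no off-the-shelf slow-manifold theorem applies. You candidly list all of this as ``the main obstacle,'' but it is not a technical remainder --- it is the content of the theorem.

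For comparison, the paper does this work in Lemma \ref{Lem.11}, arguing directly on the scalar second-order equation \eqref{Eq.4.21} for $g$: monotone convergence to any limit $\bar C\neq C_0$ is excluded by passing to the limit along a suitable sequence (with a separate differential inequality handling $\bar C=0$, which would contradict $f'>0$); $\liminf g<C_0$ is excluded by integrating a differential inequality over intervals where $g$ decreases from $C_0$ to some $C_\star\in(C_1,C_0)$, using that $\tfrac12 g'-\lambda(g^q)'\ge0$ there; and $\limsup g>C_0$ is excluded by an energy-type estimate obtained by multiplying \eqref{Eq.4.21} by $g'$ and integrating, combined with H\"older's inequality. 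Your reduced-flow picture correctly predicts the answer and could in principle be made rigorous, but as it stands the decisive convergence/branch-selection argument is missing, so the proposal is an outline rather than a proof.
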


The behavior of self-similar solutions of type $u(x,t)= t^\alpha
g(xt^\beta)$ for (\ref{Eq.1.1}) has been analyzed in \cite{GuK}.

\section{The proof of Theorem~\ref{Thm.1}}\label{Sect.2}

\noindent We start with simple result which is used later on.
\begin{lemma}\label{Lem.7}
Let $f(\xi)$ be a solution of  problem {\rm(\ref{Eq.4.3})},
{\rm(\ref{Eq.4.16})} defined for $\xi \in [0,\bar{\xi})$. Then
\begin{equation}\label{Eq.4.5}
f^\prime (\xi) > 0 \quad  {\rm and} \quad f^{\prime\prime} (\xi) >
0 \,\, \mbox {for }  \, \xi \in (0,\bar{\xi}).
\end{equation}
\end{lemma}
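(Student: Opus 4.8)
The plan is to show that the quantities $f'$ and $f''$ become positive immediately after $\xi=0$ and then cannot return to zero. First I would examine the equation at $\xi = 0$: from \eqref{Eq.4.3} and the initial data \eqref{Eq.4.16} we get $f''(0) = -\lambda|f'(0)|^q + \tfrac12\xi f'(0) - \tfrac{q-2}{2(q-1)}f(0) = \tfrac{q-2}{2(q-1)} > 0$. Hence $f''$ is positive on a right neighbourhood of $0$, and since $f'(0)=0$ this forces $f'(\xi) > 0$ on some interval $(0,\delta)$. So \eqref{Eq.4.5} holds near the origin, and it remains to propagate it across all of $(0,\bar\xi)$.

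The natural mechanism is a continuity/contradiction argument. Suppose \eqref{Eq.4.5} fails somewhere, and let $\xi_0 \in (0,\bar\xi)$ be the first point where either $f'$ or $f''$ hits zero. I would consider first the case $f'(\xi_0) = 0$ with $f'>0$ on $(0,\xi_0)$; then necessarily $f''(\xi_0) \le 0$. But plugging $\xi_0$ into \eqref{Eq.4.3} gives $f''(\xi_0) = -\lambda|f'(\xi_0)|^q + \tfrac12\xi_0 f'(\xi_0) - \tfrac{q-2}{2(q-1)}f(\xi_0) = -\tfrac{q-2}{2(q-1)}f(\xi_0)$, and since $f$ has been increasing on $(0,\xi_0)$ from the value $f(0)=-1$ we need to know the sign of $f(\xi_0)$. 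If $f(\xi_0) < 0$ this is already a contradiction ($f''(\xi_0)>0$). The remaining subcase $f(\xi_0) \ge 0$ I would rule out by a separate monotonicity observation, or by noting that as long as $f<0$ the function $f''$ stays strictly positive wherever $f'=0$, so $f'$ can never come back down to $0$ before $f$ reaches $0$; and once $f'>0$ on an interval where also $f<0$, differentiating \eqref{Eq.4.3} controls $f''$. The cleaner route is probably to handle $f''$ directly: set $w = f''$ and differentiate \eqref{Eq.4.3} to obtain a linear first-order ODE for $w$ on intervals where $f'>0$, namely $w' = \big(\tfrac12\xi - \lambda q |f'|^{q-2}f'\big)w + \big(\tfrac12 - \tfrac{q-2}{2(q-1)}\big)f' $... — more precisely $w' + \big(\lambda q |f'|^{q-1}\,\mathrm{sgn}\,f' - \tfrac12\xi\big)w = \big(\tfrac12 - \tfrac{q-2}{2(q-1)}\big)f'$, whose right-hand side equals $\tfrac{q}{2(q-1)}f' > 0$. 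Then an integrating-factor argument shows $w$ cannot reach $0$ from positive values: at a first zero $\xi_0$ of $w$ we would have $w'(\xi_0) \le 0$, but the ODE forces $w'(\xi_0) = \tfrac{q}{2(q-1)}f'(\xi_0) > 0$, a contradiction. This simultaneously keeps $f''>0$, hence $f'$ strictly increasing, hence $f'>0$ on all of $(0,\bar\xi)$.

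Concretely, the order of steps I would carry out is: (1) compute $f''(0) = \tfrac{q-2}{2(q-1)} > 0$ and deduce $f',f'' > 0$ on a maximal initial interval $(0,\xi_1)$; (2) assume $\xi_1 < \bar\xi$ and analyze the first failure point; (3) show $f' > 0$ persists because $f''$ stays positive; (4) derive the linear ODE for $w=f''$ valid while $f' > 0$, identify its source term as $\tfrac{q}{2(q-1)}f' > 0$, and conclude via the integrating factor (or directly via the sign of $w'$ at a hypothetical first zero) that $w$ stays positive; (5) combine to get a contradiction with $\xi_1 < \bar\xi$.

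The main obstacle is step (2)–(3): disentangling the logical dependence between "$f' > 0$" and "$f'' > 0$" at the first breakdown point, since a priori either could fail first. The key insight that resolves it is that the equation \eqref{Eq.4.3} evaluated at any point where $f'=0$ gives $f'' = -\tfrac{q-2}{2(q-1)}f$, which is positive as long as $f < 0$; so $f'$ cannot decrease back to $0$ while $f$ is still negative, and meanwhile $f$ only grows. Making the "while $f<0$" bookkeeping rigorous — i.e. showing $f$ stays negative long enough, or that the conclusion is unaffected once $f \ge 0$ because then the $-\tfrac{q-2}{2(q-1)}f$ term in $w$'s equation only helps even less but the source term $\tfrac{q}{2(q-1)}f'$ in the differentiated equation still saves us — is the one place where care is needed.
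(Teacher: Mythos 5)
Your proposal is correct and is essentially the paper's own proof: the paper likewise computes $f''(0)=\frac{q-2}{2(q-1)}>0$, supposes a first zero $\xi_0$ of $f''$ (so that $f'>0$ on $(0,\xi_0]$ and $f'''(\xi_0)\le 0$), and derives a contradiction from the differentiated equation, which at $\xi_0$ forces $f'''(\xi_0)=\bigl(\tfrac12-\tfrac{q-2}{2(q-1)}\bigr)f'(\xi_0)>0$; your detour through the undifferentiated equation at points where $f'=0$ is unnecessary once you note that $f''>0$ up to the first breakdown point already keeps $f'$ positive there. One small arithmetic slip: the source term is $\tfrac{1}{2(q-1)}f'$, not $\tfrac{q}{2(q-1)}f'$, but since only its positivity is used the argument is unaffected.
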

\begin{proof} It is obvious, $f^{\prime\prime} (0) =
(q-2)/[2(q-1)] > 0.$ Therefore by continuity $f^{\prime\prime}
(\xi) > 0$ and $f^\prime (\xi) > 0$ in the right-hand side of some
neighborhood of point $\xi = 0$. We suppose that there exists
$\xi_0$ such that $0<\xi_0<\bar{\xi}, \,$ $f^{\prime\prime} (\xi)
> 0$ for $0 < \xi < \xi_0$ and $f^{\prime\prime} (\xi_0) = 0.$
Then $f^{\prime} (\xi)
> 0$ for $0 < \xi \leq \xi_0$ and $f^{\prime\prime\prime} (\xi_0) \leq 0.$
Equation (\ref{Eq.4.3}) at $\xi_0$ subsequently leads to a
contradiction.
\end{proof}

Now we shall obtain the upper bound for $f^\prime (\xi)$.
\begin{lemma}\label{Lem.10}
There exists $\xi_0 > 0$ such that
\begin{equation}\label{Eq.4.17}
f^\prime(\xi) < \left\{ \frac{\xi}{2\lambda}
\right\}^\frac{1}{q-1} \, \mbox {for } \, \xi \geq \xi_0.
\end{equation}
\end{lemma}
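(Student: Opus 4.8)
The plan is to combine the positivity statement of Lemma~\ref{Lem.7} with a direct rearrangement of equation~(\ref{Eq.4.3}). First I would record that, by Lemma~\ref{Lem.7}, $f'$ is strictly increasing on $(0,\bar\xi)$, so $f'(\xi)\ge f'(1)>0$ for $\xi\ge 1$. From this one checks that the solution is defined on all of $[0,\infty)$: on a bounded maximal interval $f'$ would be monotone increasing, and it can neither stay bounded (then $f$, and via~(\ref{Eq.4.3}) also $f''$, would be bounded and the solution would extend) nor tend to $+\infty$ (then in $f''=-\lambda|f'|^{q}+\frac{1}{2}\xi f'-\frac{q-2}{2(q-1)}f$ the term $-\lambda|f'|^{q}$ would dominate, forcing $f''<0$ near the endpoint, against Lemma~\ref{Lem.7}). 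Since $f'(\xi)\ge f'(1)>0$ for $\xi\ge 1$ we get $f(\xi)\to+\infty$, so there is $\xi_0>1$ with $f(\xi)>0$ for all $\xi\ge\xi_0$.

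The core step is a pointwise contradiction. Suppose that for some $\xi_*\ge\xi_0$ one had $f'(\xi_*)\ge\{\xi_*/(2\lambda)\}^{1/(q-1)}$, equivalently $\lambda f'(\xi_*)^{q-1}\ge\xi_*/2$; multiplying by $f'(\xi_*)>0$ yields $\lambda f'(\xi_*)^{q}\ge\frac{1}{2}\xi_* f'(\xi_*)$. Then, solving~(\ref{Eq.4.3}) for $f''$ and using $|f'|=f'$ (Lemma~\ref{Lem.7}),
\[
f''(\xi_*)=\frac{1}{2}\xi_* f'(\xi_*)-\lambda f'(\xi_*)^{q}-\frac{q-2}{2(q-1)}f(\xi_*)\le-\frac{q-2}{2(q-1)}f(\xi_*)<0,
\]
since $q>2$ and $f(\xi_*)>0$; this contradicts $f''(\xi_*)>0$ from Lemma~\ref{Lem.7}. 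Hence $f'(\xi)<\{\xi/(2\lambda)\}^{1/(q-1)}$ for every $\xi\ge\xi_0$, which is~(\ref{Eq.4.17}).

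I expect no genuine obstacle in the second paragraph: the constant $1/(2\lambda)$ in the barrier is chosen precisely so that $f'\ge\{\xi/(2\lambda)\}^{1/(q-1)}$ makes the two growth terms $\frac{1}{2}\xi f'$ and $\lambda|f'|^{q}$ of~(\ref{Eq.4.3}) cancel with the right sign, while the remaining term $-\frac{q-2}{2(q-1)}f$ has a definite sign once $f>0$. The only point requiring care is the preliminary one: knowing that $f$ is defined for large $\xi$ and is positive there. If global existence of the solution of~(\ref{Eq.4.3}), (\ref{Eq.4.16}) is already available, the first paragraph collapses to the single remark that $f$, being increasing with increasing derivative, must eventually be positive.
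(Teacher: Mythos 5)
Your proposal is correct and takes essentially the same route as the paper: the decisive step --- rearranging (\ref{Eq.4.3}) with $f''>0$ and $f\ge 0$ (Lemma~\ref{Lem.7}) past the unique zero of $f$ to get $\lambda (f')^{q-1}<\xi/2$ --- is exactly the paper's argument. The only difference is ordering: the paper proves the bound on the maximal interval $[0,\bar\xi)$ and then deduces $\bar\xi=\infty$ from it, while you first establish global existence by a separate (slightly more detailed) blow-up analysis and then apply the pointwise contradiction; both versions are fine.
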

\begin{proof}

Lemma~\ref{Lem.7} implies that $f(\xi) \to \infty$ as $\xi \to
\bar{\xi},$ and, there exists unique point $\xi_0 \in
(0,\bar{\xi})$ such that $f (\xi) <0$ for  $\xi \in (0,\xi_0)$ and
$f (\xi)> 0$ for   $\xi \in (\xi_0,\bar{\xi}).$ Substituting
$f^{\prime\prime} >0$ and $f \ge 0$ in (\ref{Eq.4.3}) yields
$f^{\prime} < \{\xi/(2\lambda)\}^{1/(q-1)}$ for  $\xi \in
[\xi_0,\bar{\xi}).$ Thus, $\bar{\xi}= \infty$ and (\ref{Eq.4.17})
holds.
\end{proof}

Changing variables in (\ref{Eq.4.3})
\begin{equation}\label{Eq.4.20}
f^\prime(\xi)=\xi^{1/(q-1)} g(t), \quad \xi = \exp t,
\end{equation}
we get new equation
\begin{equation}\label{Eq.4.21}
g^{\prime\prime} + \frac{3-q}{q-1}g^\prime - \frac{q-2}{(q-1)^2}g =
\left\{ \frac{1}{2}g^\prime - \lambda (g^q)^\prime + \frac{1}{q-1}g -
\frac{\lambda q}{q-1} g^q \right\} \exp (2t).
\end{equation}
By (\ref{Eq.4.5}), (\ref{Eq.4.17}) and (\ref{Eq.4.20}) there hold
\begin{equation}\label{Eq.4.22}
 g(t) > 0 \quad \mbox {for any } t \in \mathbb{R},
\end{equation}
\begin{equation}\label{Eq.4.23}
g(t) < \left\{ \frac{1}{2\lambda} \right\}^\frac{1}{q-1} \quad
\mbox {and} \quad g^\prime (t) > -\frac{g}{q-1}
\end{equation}
for sufficiently large values of $t$. Put
\begin{equation}\label{Eq.4.24}
C_0 =  \left\{ \frac{1}{\lambda q} \right\}^\frac{1}{q-1} , \quad
C_1 =  \left\{ \frac{1}{2\lambda q} \right\}^\frac{1}{q-1}.
\end{equation}
It is obvious, $C_0 > C_1$. Now we shall establish the asymptotic
behavior of $g(t)$ as $t \rightarrow +\infty$.
\begin{lemma}\label{Lem.11}
Assume that $g(t)$ is defined in {\rm (\ref{Eq.4.20})}. Then
\begin{equation}\label{Eq.4.25}
\lim_{t\rightarrow +\infty} g(t) = C_0.
\end{equation}
\end{lemma}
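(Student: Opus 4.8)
The plan is to lower the order of equation~(\ref{Eq.4.21}) by one. Put $h:=g'+g/(q-1)$; by~(\ref{Eq.4.23}) (equivalently, because $f''>0$) we have $h>0$ for all large $t$. A direct computation shows that the left-hand side of~(\ref{Eq.4.21}) equals $h'-\frac{q-2}{q-1}h$, while the bracket on its right-hand side, rewritten via $g'=h-g/(q-1)$ and $g^{q}=g\cdot g^{q-1}$, equals $h\bigl(\frac12-\lambda q g^{q-1}\bigr)+\frac{g}{2(q-1)}$. Hence~(\ref{Eq.4.21}) is equivalent to the first-order system
\[
h'-\frac{q-2}{q-1}\,h=\Bigl[\,h\Bigl(\frac12-\lambda q g^{q-1}\Bigr)+\frac{g}{2(q-1)}\Bigr]e^{2t},\qquad g'=h-\frac{g}{q-1}.
\]
Everything then hinges on two sign facts: the coefficient $\frac12-\lambda q g^{q-1}$ vanishes exactly at $g=C_1$, and $P(g):=\frac{1}{q-1}g\bigl(1-\lambda q g^{q-1}\bigr)$ vanishes exactly at $g=C_0$, while $0<C_1<C_0<(2\lambda)^{-1/(q-1)}$, the last number being the a priori upper bound on $g$ coming from~(\ref{Eq.4.23}).

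A cheap preliminary observation: at any critical point of $g$ the bracket above reduces to $P(g)$, so there $P(g)e^{2t}=h'-\frac{q-2}{q-1}h=g''-\frac{q-2}{(q-1)^2}g$. Since $g$ is bounded, this forces, for all large $t$, every local maximum of $g$ to satisfy $g>C_0$ and every local minimum to satisfy $g\le C_0+o(1)$.

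The main obstacle is the lower bound $\liminf_{t\to\infty}g\ge C_1$, because one has no a priori control of $g'$ or $g''$ and must extract it from the singular factor $e^{2t}$. On any time interval where $g<C_1$ one has $\frac12-\lambda q g^{q-1}>0$, so the $h$-equation gives $h'>\frac{q-2}{q-1}h+\frac{g}{2(q-1)}e^{2t}>0$ and $(\log h)'\ge\frac{q-2}{q-1}+\bigl(\frac12-\lambda q g^{q-1}\bigr)e^{2t}$. Using the elementary bound $g(t)\ge c\,e^{-t/(q-1)}$ (immediate from $h>0$, since $\bigl(e^{t/(q-1)}g\bigr)'=e^{t/(q-1)}h>0$) and integrating, one finds that $h$ --- hence $g'=h-g/(q-1)$, hence $g$ itself --- grows so fast on such an interval that $g$ is driven back above $C_1$ within a lapse of time tending to $0$ as $t\to\infty$; since $g'\ge -g/(q-1)\ge -C_1/(q-1)$ as long as $g<C_1$, each such excursion has depth $o(1)$, and $\liminf_{t\to\infty}g\ge C_1$ follows. (The same mechanism excludes $g\to0$ and, more generally, convergence of $g$ to any value in $(0,C_1]$.)

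Finally, $g$ being confined for large $t$ to $[C_1,(2\lambda)^{-1/(q-1)}]$, I would exploit the fast--slow structure of the $h$-equation: once $g$ is bounded away from $C_1$, $\frac12-\lambda q g^{q-1}<0$ is bounded away from $0$, the coefficient $-(\lambda q g^{q-1}-\frac12)e^{2t}$ of $h$ tends to $-\infty$, and $h$ is slaved to $h^{*}:=\frac{g}{2(q-1)(\lambda q g^{q-1}-1/2)}$ up to an $o(1)$ error, since $h-h^{*}$ satisfies a linear equation with the strong restoring term $-(\lambda q g^{q-1}-\frac12)e^{2t}(h-h^{*})$. Substituting into $g'=h-g/(q-1)$ yields
\[
g'=\frac{2g\bigl(1-\lambda q g^{q-1}\bigr)}{(q-1)\bigl(2\lambda q g^{q-1}-1\bigr)}+o(1),
\]
an asymptotically autonomous equation whose limiting right-hand side is positive for $C_1<g<C_0$, negative for $g>C_0$, and zero only at $g=C_0$; hence $(g-C_0)^2$ is eventually decreasing up to an $o(1)$ error and $g\to C_0$. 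It remains to note that $g$ cannot linger near $C_1$ (there $h^{*}\to+\infty$, so $h$ is pushed up and $g$ is repelled) nor converge to a value of $(C_1,C_0)$ (the bracket in the $h$-equation would have to vanish in the limit although $P$ does not vanish there), which, together with the critical-point observation (local maxima exceed $C_0$, while the slow field points towards $C_0$ from both sides), also rules out sustained oscillation. This gives $\lim_{t\to\infty}g(t)=C_0$, that is~(\ref{Eq.4.25}).
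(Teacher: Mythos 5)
Your route is genuinely different from the paper's, and its algebra is sound: with $h=g'+g/(q-1)$, equation (\ref{Eq.4.21}) is indeed equivalent to $h'-\frac{q-2}{q-1}h=\bigl[h\bigl(\frac12-\lambda qg^{q-1}\bigr)+\frac{g}{2(q-1)}\bigr]e^{2t}$, the thresholds $C_1$ and $C_0$ play the roles you describe, your critical-point remark agrees with the paper's opening observation, and your excursion argument for $\liminf_{t\to\infty}g\ge C_1$ can be made rigorous essentially as you indicate (the forcing $\frac{g}{2(q-1)}e^{2t}\ge c\,e^{(2-\frac{1}{q-1})t}$, with $g\ge c\,e^{-t/(q-1)}$ coming from $h>0$, lifts $h$ above any fixed level within a time that tends to $0$, while $g'\ge -g/(q-1)$ bounds the depth of each dip). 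The paper, by contrast, never leaves the second-order equation: it treats the monotone case by a sequence/limiting argument, obtains $\liminf g\ge C_0$ by integrating the sign identity (\ref{Eq.4.29}) over intervals of decrease, and obtains $\limsup g\le C_0$ by the energy computation with H\"older's inequality and (\ref{Eq.4.39}).

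However, the convergence step of your plan is not yet a proof, and it sits exactly where the difficulty of the lemma lies. The claim that $h=h^{*}(g)+o(1)$ once $g$ is bounded away from $C_1$ needs an a priori upper bound on $h$ (equivalently on $g'$), which neither (\ref{Eq.4.22})--(\ref{Eq.4.23}) nor your first step provides: writing $w=h-h^{*}(g)$ gives $w'=-(\lambda qg^{q-1}-\frac12)e^{2t}w+\frac{q-2}{q-1}h-(h^{*})'(g)\,g'$, and the inhomogeneity is bounded only after $h$ has been bounded. Such a bound can be extracted from the same equation on intervals where $g\ge C_1+\eta$, but each time $g$ returns from a neighbourhood of $C_1$ the re-entry value of $h$ may a priori be of order $e^{2t}$ (your own repulsion mechanism makes $h$ large there), so the relaxation layer and the resulting overshoot of $g$ have to be estimated. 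Likewise, the statements that $g$ ``cannot linger near $C_1$'' and that sustained oscillation between a neighbourhood of $C_1$ and values above $C_0$ is excluded are asserted rather than proved; they require the dichotomy (either $h$ already exceeds a large threshold and $g'=h-g/(q-1)$ expels $g$ upward in time $o(1)$, or $h$ lies below it and then $h'\gtrsim e^{2t}$ pushes it past the threshold in time $o(1)$), together with the remark that slaving persists while $g$ would traverse $[C_1+\eta,\,C_0-\eta]$ downward, which is what actually forbids a return toward $C_1$; and near $C_0$, where the limiting drift vanishes, the ``$(g-C_0)^2$ decreasing up to $o(1)$'' phrase must be replaced by a confinement argument in shrinking neighbourhoods. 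None of these steps looks wrong, and I expect the plan can be completed, but as written the heart of the lemma rests on unproved $o(1)$ assertions, for which the paper's integral and energy estimates are the rigorous substitute.
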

\begin{proof}
From a careful inspection of equation (\ref{Eq.4.21}) we conclude
that a local maximum of $g(t)$ can happen  only when $g(t) > C_0$.

At first we suppose that $g(t)$ does not tend to $C_0$ as
$t\rightarrow +\infty$ and $g(t)$ is monotonic solution of
(\ref{Eq.4.21}). Then there exists $\bar C \neq C_0$ such that
$\lim_{t \to \infty} g(t) = \bar C.$ It is obvious, for any
$\varepsilon
>0$ and some $A>0$ there exists a sequence $\{ t_k \}$ with the
properties:
$$
\lim_{k \to \infty}t_k = +\infty, \, \vert g^{\prime\prime}(t_k)
\vert \leq A, \,  \vert g^\prime (t_k) \vert \leq \varepsilon.
$$
Passing to the limit in (\ref{Eq.4.21}) as $t=t_k \to +\infty$ and
choosing $\varepsilon$ in a suitable way we get that left-hand
side is bounded, while right-hand side tends to infinity if $\bar
C \neq 0.$ Let $\bar C = 0.$ Using (\ref{Eq.4.23}) we conclude
from (\ref{Eq.4.21}) that
\begin{equation}\label{Eq.4.26}
g^{\prime\prime} + \frac{3-q}{q-1}g^\prime \ge \frac{g}{3(q-1)}
\exp (2t)
\end{equation}
for large values of $t.$ Then for large values of $k$
(\ref{Eq.4.26}) implies
\begin{equation}\label{Eq.4.27}
 g (t_k) \leq \alpha \exp (-2t_k),
\end{equation}
where positive constant $\alpha$ does not depend on $k.$ Setting $
\xi_k = \exp t_k,$ from (\ref{Eq.4.20}), (\ref{Eq.4.27}) we get
$$
f^\prime(\xi_k) \leq \alpha \xi_k^{(3-2q)/(q-1)},
$$
that contradicts (\ref{Eq.4.5}).

Now until the end of the proof we assume that $g(t)$ is not
monotonic solution of (\ref{Eq.4.21}). Suppose that $\lim\inf_{t
\to \infty} g(t)<C_0.$ Then there exist positive unbounded
increasing sequences $\{s_k\}$ and $\{t_k\}\,$ such that $t_k >
s_k,$
\begin{equation}\label{Eq.4.28}
g^\prime (t) \leq 0, \,\,\, t \in [s_k, t_k],
\end{equation}
and $g(s_k)=C_0,\,$ $g(t_k)=C_\star$ for $k \in \mathbb {N},$
where $C_1 < C_\star < C_0.$ Then
\begin{equation}\label{Eq.4.29}
\frac{1}{2} g^\prime -\lambda (g^q)^\prime = -\lambda q (g^{q-1}
-C_1^{q-1)}) g^\prime \geq-\lambda q (C_\star^{q-1} -C_1^{q-1)})
g^\prime \geq 0 \,\, \text {\rm on} \,\, [s_k, t_k].
\end{equation}
So, (\ref{Eq.4.21}) and (\ref{Eq.4.29}) imply that
$$
 g^{\prime\prime} (t) + \frac{3-q}{q-1}  g^{\prime} (t) \geq -\lambda q (C_\star^{q-1}
-C_1^{q-1)}) g^\prime (t) \exp (2 s_k), \,\, t \in [s_k, t_k].
$$
Hence, integrating with respect to $t$ from $s_k$ to $t_k,$ we get
$$
 \left\{ g^{\prime} (t) + \frac{3-q}{q-1}  g (t) \right\}
 \Bigr|^{t_k}_{s_k} \geq \lambda q (C_\star^{q-1}
-C_1^{q-1)}) (C_0 - C_\star) \exp (2 s_k).
$$
This leads to a contradiction, since (\ref{Eq.4.22}),
(\ref{Eq.4.23}) and (\ref{Eq.4.28}) imply that the left-hand side
of last inequality is bounded, while the right-hand side becomes
unbounded as $k \to \infty.$

Let us prove that $\lim\inf_{t \to \infty} g(t)=C_0.$ Indeed,
otherwise there exist $\varepsilon > 0$ and sequence $\{ \tau_k
\}$ of points of local minima $g(t)$ with the properties: $\tau_k
\to +\infty$ as $k \to +\infty$ and
\begin{equation}\label{Eq.4.33}
g(\tau_k) \geq C_0 + \varepsilon.
\end{equation}
Passing in (\ref{Eq.4.21}) to the limit as $t=\tau_k \to +\infty$
we get a contradiction.

To end the proof we show that $\lim\sup_{t \to \infty} g(t)=C_0.$
Otherwise $\lim\sup_{t \to \infty} g(t)>C_0.$ Then there exist
unbounded increasing sequences $\{s_k\}$ and $\{t_k\}\,$ such that
$t_k > s_k > 2,$
\begin{equation}\label{Eq.4.34}
g^\prime (s_k) =0, \, g^\prime (t_k) =0, \, g^\prime (t) \geq 0 \,
\, \mbox {for } \, t \in [s_k, t_k], \, g(t_k) > C_0 + \delta, \,
\vert g(s_k)-C_0 \vert < \varepsilon,
\end{equation}
where $k \in \mathbb {N}, \,$ $\delta >0, \,$
\begin{equation}\label{Eq.4.35}
\varepsilon = \min \left\{ \delta/2, \frac{q-1}{4C_0}\delta^2,
\left[ 1-\left( 7/8 \right)^\frac{1}{q-1} \right]C_0\right\}.
\end{equation}
Without loss of a generality we can suppose
\begin{equation}\label{Eq.4.36}
C_0 - \varepsilon < g(s_k) < C_0
\end{equation}
or
\begin{equation}\label{Eq.4.37}
C_0 \leq g(s_k) < C_0 + \varepsilon.
\end{equation}
Let (\ref{Eq.4.36}) be valid. If (\ref{Eq.4.37}) is realized the
arguments are similar and simpler. Denote by $\bar t_k \in
(s_k,t_k)$ points such that
\begin{equation}\label{Eq.4.38}
g(\bar t_k)=C_0.
\end{equation}
Applying H\"older's inequality we derive
\[
\int_{\bar t_k}^{t_k} g^\prime \, d\tau \leq \left( \int_{\bar
t_k}^{t_k} \left( g^\prime \right) ^2 \exp (2\tau) \, d\tau
\right)^{1/2} \left( \int_{\bar t_k}^{t_k} \exp (-2\tau) \, d\tau
\right)^{1/2}
\]
and therefore
\begin{equation}\label{Eq.4.39}
\int_{\bar t_k}^{t_k} \left( g^\prime \right) ^2 \exp (2\tau) \,
d\tau \geq 2 \delta^2 \exp(2\bar t_k).
\end{equation}
We multiply (\ref{Eq.4.21}) by $g^\prime (t)$ and integrate after
over $[s_k, t_k].$ Using (\ref{Eq.4.24}),
(\ref{Eq.4.34})--(\ref{Eq.4.36}), (\ref{Eq.4.38}) and
(\ref{Eq.4.39}) we obtain
\begin{eqnarray*}
-\frac{q-2}{2(q-1)} g^2 (t_k) & \leq & \frac{q-3}{q-1}
\int_{s_k}^{t_k} ( g^\prime )^2 \, d\tau + \int_{s_k}^{t_k} (
g^\prime )^2 \left[ \frac{1}{2} - \lambda q g^{q-1} \right]
\exp (2\tau) \, d\tau \\
&+& \frac{ \exp (2\bar t_k) }{q-1}  \int_{s_k}^{\bar t_k} \left[
\frac{1}{2}( g^2)^\prime - \frac{\lambda q}{q+1} (g^{q+1})^\prime
\right] \, d\tau \\
&\leq& - \frac{1}{4} \int_{\bar t_k}^{t_k} ( g^\prime )^2 \exp
(2\tau) \, d\tau + \frac{\exp (2\bar t_k)}{q-1}  \left(
\frac{g^2}{2} - \frac{\lambda q}{q+1} g^{q+1} \right)
\Bigr|_{t=s_k}^{t=\bar t_k} \\
& \leq & \left[ - \frac{\delta^2}{2} + \frac{\varepsilon C_0}{q-1}
\right] \exp (2\bar t_k) \leq - \frac{\delta^2}{4} \exp (2\bar
t_k) .
\end{eqnarray*}
Passing to the limit as $k \to \infty$ we get a contradiction with
(\ref{Eq.4.23}).
\end{proof}

From Lemma~\ref{Lem.11} and the definition of $g(t)$ it follows
(\ref{Eq.4.40}). Theorem~\ref{Thm.1} is proved. \qed

\begin{remark}
{\rm Theorem~\ref{Thm.1} demonstrates the optimality of
Theorem~2.3 in \cite{GlGK}. The arguments are the same as in
 Remark 4.6 of that paper.}
\end{remark}

Our next result shows that equation (\ref{Eq.4.3}) with initial
data
\begin{equation}\label{Eq.4.41}
f(0) = f_0 > 0, \quad f^\prime (0) = 0
\end{equation}
has no global solution.

\begin{theorem}\label{Thm.7}
Let $q>2$ and $f(\xi)$ be a solution of  problem (\ref{Eq.4.3}),
(\ref{Eq.4.41}). Then there exists $\xi_\star$ such that
$0<\xi_\star<+\infty$ and $f(\xi) \to -\infty$ as $\xi \uparrow
\xi_\star$  .
\end{theorem}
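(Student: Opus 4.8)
The plan is to track the point $(f(\xi),f'(\xi))$ as $\xi$ increases: first through the region $\{f\ge 0\}$, where (\ref{Eq.4.3}) forces $f$ to be concave and hence to become negative at a finite $\xi_1$; and then through $\{f<0\}$, where the gradient term $\lambda|f'|^q$ and the drift term $-\tfrac12\xi f'$ — both of which pull $f''$ downward once $f'<0$ — make the solution cease to exist at a finite $\xi_\star$ with $f'(\xi)\to-\infty$. That $f(\xi)$ itself tends to $-\infty$ there will be the main remaining issue.

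\textbf{Step 1: $f$ becomes negative.} From (\ref{Eq.4.41}) and (\ref{Eq.4.3}) one has $f''(0)=-\tfrac{q-2}{2(q-1)}f_0<0$, so on the maximal existence interval $(0,\bar\xi)$ we have $f'<0$ and $f$ strictly decreasing near $0$. Write (\ref{Eq.4.3}) as $f''=-\lambda|f'|^q+\tfrac12\xi f'-\tfrac{q-2}{2(q-1)}f$. While $f\ge 0$ and $f'<0$, all three terms on the right are $\le0$, the first strictly, so $f''<0$: $f$ is strictly concave wherever it is nonnegative. Then $f'$ is strictly decreasing with $f'(\xi_0)<0$ for every small $\xi_0>0$, so $f$ stays below its tangent line at $\xi_0$, which has negative slope and hence vanishes at a finite value of $\xi$; thus there is $\xi_1\in(0,\bar\xi)$ with $f(\xi_1)=0$, $f<0$ on $(\xi_1,\bar\xi)$, and $f'(\xi_1)=:-v_1<0$. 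Multiplying (\ref{Eq.4.3}) by $f'$ also gives the monotone ``energy'' identity $\big(\tfrac12(f')^2+\tfrac{q-2}{4(q-1)}f^2\big)'=\tfrac12\xi(f')^2+\lambda|f'|^{q+1}>0$, so in particular $v_1\ge\big(\tfrac{q-2}{2(q-1)}\big)^{1/2}f_0>0$.

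\textbf{Step 2: $f'$ blows up at a finite $\xi_\star$.} On $(\xi_1,\bar\xi)$ put $v=-f'>0$; then by (\ref{Eq.4.3}),
\[
v'=\lambda v^q+\tfrac12\xi v-\tfrac{q-2}{2(q-1)}|f|,\qquad |f|'=v,
\]
with $v(\xi_1)=v_1>0$, $|f(\xi_1)|=0$, so $v'(\xi_1)>0$. The only sign-indefinite term, $-\tfrac{q-2}{2(q-1)}|f|$, is dominated by $\tfrac12\xi v$: since $\tfrac{q-2}{2(q-1)}<\tfrac12$, whenever $v$ is nondecreasing on $[\xi_1,\xi]$ we have $|f(\xi)|=\int_{\xi_1}^\xi v\le(\xi-\xi_1)\,v(\xi)$, hence $\tfrac12\xi v-\tfrac{q-2}{2(q-1)}|f|\ge v\big(\tfrac12\xi-\tfrac{q-2}{2(q-1)}(\xi-\xi_1)\big)>0$, and therefore $v'\ge\lambda v^q>0$. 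Starting from $v'(\xi_1)>0$, a continuity/bootstrap argument upgrades this to: $v$ is nondecreasing on all of $(\xi_1,\bar\xi)$ and satisfies $v'\ge\lambda v^q$ there. Comparison with $w'=\lambda w^q$, $w(\xi_1)=v_1$, then forces $v\to+\infty$ as $\xi$ increases to some finite $\xi_\star\le\xi_1+\tfrac{v_1^{1-q}}{\lambda(q-1)}$; thus $\bar\xi=\xi_\star<\infty$ and $f'(\xi)\to-\infty$ as $\xi\uparrow\xi_\star$.

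\textbf{Step 3: from $f'\to-\infty$ to $f\to-\infty$ — the main obstacle.} Since $f$ is strictly decreasing on $(\xi_1,\xi_\star)$ with $f(\xi_1)=0$, the statement $f(\xi)\to-\infty$ is equivalent to $\int_{\xi_1}^{\xi_\star}v\,d\xi=+\infty$, and this is the step I expect to require the real work. The estimates that produce $\xi_\star$ also constrain the blow-up rate — combining $\lambda v^q\le v'\le\lambda v^q+\tfrac12\xi_\star v$ gives $v(\xi)\asymp(\xi_\star-\xi)^{-1/(q-1)}$ as $\xi\uparrow\xi_\star$ — so to decide the divergence of that integral one must go beyond the one-sided bound $v'\ge\lambda v^q$ and the crude estimate $|f|\le(\xi-\xi_1)v$ (which discards the fact that $v$ is strongly increasing), comparing $v$, or $|f|$, near $\xi_\star$ with an explicit power of $(\xi_\star-\xi)$ that is a sub- or supersolution of the \emph{full} equation (\ref{Eq.4.3}), so that the feedback of the growing $|f|$ through $-\tfrac{q-2}{2(q-1)}|f|$ is retained; it is here that the hypothesis $q>2$ should be decisive. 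A secondary point to settle is that the solution does not cease to exist already inside $\{f>0\}$ — equivalently, an a priori bound for $f'$ on the compact interval $[0,\xi_1]$ — for which the concavity of $f$ on $\{f\ge0\}$ and the energy identity of Step 1 are the natural tools.
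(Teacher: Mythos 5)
Your Steps 1 and 2 are correct and, in substance, parallel to what the paper's proof actually establishes. The paper argues by contradiction from global existence: it shows $f'<0$, $f''<0$ (the analogue of Lemma~\ref{Lem.7}), differentiates (\ref{Eq.4.3}) and integrates once to obtain $f''<-\lambda|f'|^q$ (its inequality (\ref{Eq.0.3})), and a second integration yields the contradiction. Your route through the first zero $\xi_1$ of $f$, the energy identity, and the bound $|f|\le(\xi-\xi_1)v$ arrives at the same differential inequality $v'\ge\lambda v^q$ and hence at the same conclusion: a finite $\xi_\star$ beyond which the solution cannot be continued, with $f'(\xi)\to-\infty$. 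The side issue you flag (existence up to $\xi_1$) is harmless: if the solution ceased to exist while $f\ge0$, then $f$ would be bounded there and $f'$, being decreasing, would have to blow up, so a finite $\xi_\star$ with $f'\to-\infty$ is obtained anyway; the paper sidesteps this entirely by working under the global-existence hypothesis.

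The gap you identify in Step 3 is genuine, but it is also not filled by the paper: its proof stops at the nonexistence of a global solution and the assertion $f(\xi)\to-\infty$ receives no argument. In fact your own rate estimate, pushed one line further, shows that this assertion cannot be reached for $q>2$. Integrating $v'\ge\lambda v^q$ (equivalently the paper's (\ref{Eq.0.3}), which holds on the whole maximal interval since its derivation uses only $f'<0$, $f''<0$ and $f'(0)=0$, $f''(0)<0$) from $\xi$ to $\xi_\star$ gives $v(\xi)\le\left[\lambda(q-1)(\xi_\star-\xi)\right]^{-1/(q-1)}$; since $1/(q-1)<1$ for $q>2$, this bound is integrable up to $\xi_\star$, hence $\int_0^{\xi_\star}|f'|\,d\xi<\infty$ and $f$ tends to a \emph{finite} limit as $\xi\uparrow\xi_\star$, while only $f'$ tends to $-\infty$ (gradient blow-up). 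So the divergence of $\int v$ that you correctly single out as the crux does not hold, and no choice of sub/supersolution can restore it: the statement as written is stronger than what either your argument or the paper's proves, and is in fact contradicted by the estimate above. The correct conclusion obtainable from your Steps 1--2 (or from the paper's proof of non-global existence) is: there exists $\xi_\star\in(0,+\infty)$ such that the solution of (\ref{Eq.4.3}), (\ref{Eq.4.41}) cannot be continued past $\xi_\star$, with $f'(\xi)\to-\infty$ and $f$ bounded as $\xi\uparrow\xi_\star$. Your instinct that Step 3 was the main obstacle was exactly right; it is an obstacle that cannot be overcome.
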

\begin{proof}
Suppose that problem (\ref{Eq.4.3}), (\ref{Eq.4.41}) has
infinitely extendable to the right solution $f(\xi).$ Using
arguments of Lemma~\ref{Lem.7} we show that $f^\prime (\xi) < 0
\quad \mbox {and } \quad f^{\prime\prime} (\xi) < 0 \quad \mbox
{for any } \xi > 0.$ From (\ref{Eq.4.3}) we obtain
\begin{equation}\label{Eq.0.2}
f^{\prime\prime\prime}(\xi) < -\lambda \left( \vert f^\prime(\xi)
\vert^q \right)^\prime.
\end{equation}
After integration (\ref{Eq.0.2}) over $[0,\xi]$ we conclude that
\begin{equation}\label{Eq.0.3}
f^{\prime\prime}(\xi) < -\lambda \vert f^{\prime}(\xi) \vert^q.
\end{equation}
Integrating (\ref{Eq.0.3}) over $[\xi_1,\xi]$ $(0 < \xi_1 < \xi)$
we infer
\[
\frac{1}{(q-1)\vert f^{\prime}(\xi_1) \vert^{q-1}} > \lambda (\xi
- \xi_1).
\]
Passing to the limit as $\xi \to \infty$  we obtain a
contradiction which proves the theorem. \end{proof}


\begin{thebibliography}{99}

\bibitem{AmoB} L. Amour, M.
Ben-Artzi; {\it Global existence and decay for viscous
Hamilton-Jacobi equations}, Nonlinear Anal. {\bf 31} (1998),
621--628.

\bibitem{BL} S. Benachour, P. Laurencot;
{\it Global solutions to viscous Hamilton-Jacobi equations with
irregular initial data}, Comm. Partial Differential Equations {\bf
24} (1999), 1999--2021.

\bibitem{Benac1} S. Benachour, B. Roynette, P. Vallois;
{\it Solutions fondamentales de $u_t - \frac{1}{2}u_{xx} =
\pm\left|u_x\right|$}, Ast\'erisque {\bf 236} (1996), 41--71.

\bibitem{Benar} M. Ben-Artzi;
{\it Global existence and decay for a nonlinear parabolic
equation}, Nonlinear Anal. {\bf 19} (1992), 763--768.

\bibitem{BGL} M. Ben-Artzi, J. Goodman, A. Levy; {\it Remarks on a
nonlinear parabolic equation}, Trans. Amer. Math. Soc. {\bf 352}
(2000), 731--751.

\bibitem{BSW2} M. Ben-Artzi, P. Souplet, F. Weissler; {\it Sur la non-existence et
la non-unicit\'e des solutions du probl\`eme de Cauchy pour une
\'equation parabolique semi-lin\'eaire}, C.R. Acad. Sci. Paris
S\'er. I Math. {\bf 329} (1999), 371--376.

\bibitem{BSW} M. Ben-Artzi, P. Souplet, F. Weissler; {\it The local
theory for viscous Hamilton-Jacobi equations in Lebesgue spaces},
J. Math. Pures Appl. {\bf 81} (2002), 343--378.

\bibitem{C} I. Corwin;
{\it The Kardar-Parisi-Zhang equation and universality class},
Random Matrices Theory Appl. {\bf 1} (2012) 1130001, 76 p.

\bibitem{GGK} B. Gilding, M. Guedda, R. Kersner;
{\it The Cauchy problem for $u_t=\Delta u + | \nabla u |^q,$ J},
Math Anal. Appl. {\bf 284} (2003), 733--755.

\bibitem{GlGK} A. Gladkov, M. Guedda, R. Kersner;
{\it A KPZ growth model with possibly unbounded data: Correctness
and blow-up},  Nonlinear Anal. {\bf 68} (2008), 2079--2091.

\bibitem{GuK} M. Guedda, R. Kersner;
{\it Self-similar solutions to the generalized deterministic KPZ
equation}, Nonlinear Diff. Equ. Appl. {\bf 10} (2003), 1--13.

\bibitem{KPZ} M. Kardar,  G. Parisi, Y.-C. Zhang;
{\it Dynamic scaling of growing interfaces},  Phys. Rev. Lett.
{\bf 56} (1986), 889--892.

\bibitem{KS} J. Krug, H. Spohn;
{\it Kinetic roughening of growing surfaces, in: Solids far from
Equilibrium} (edited by C.~Godr\`eche), Cambridge University
Press, Cambridge, 1991, pp. 479--582.


\end{thebibliography}
\end{document}